\pgfplotsset{compat=1.9}
\theoremstyle{plain}
\newtheorem{theorem}{Theorem}[section]
\newtheorem{conjecture}[theorem]{Conjecture}
\theoremstyle{definition}
\newtheorem{definition}[theorem]{Definition}
\newtheorem{question}[theorem]{Question}
\newcommand{\C}{\mathbb{C}}
\newcommand{\Z}{\mathbb{Z}}
\newcommand{\diff}[2]{#1 \cdot #2^{-1}}
\renewcommand{\paragraph}[1]{\leavevmode\vspace{0,2cm}\newline{\it #1}\\

}
\let\oldReturn\Return
\renewcommand{\Return}{\State\oldReturn}
\title{Results on formally dual sets in finite abelian groups of size 64 obtained from a graph search algorithm}
\author{Robert Schüler}
\date{February 2023}
\begin{document}

\maketitle

\begin{abstract}
    We shortly present two small results regarding the study formal duality in finite abelian groups as introduced by Cohn, Kumar, Reiher and
    Schürmann.
    In particular, we give a new example of a formally self dual set in $\Z_2^2\times\Z_4^2$ and computed nonexistence of primitive formally dual sets of size $8$ in $\Z_8^2$.
\end{abstract}

\section{Introduction}

Formally duality in finite abelian groups is a concept introduced by Cohn, Kumar and Schürmann while studying energy minimization problems (see \cite{cohn2009ground}, \cite{cohn2014formal}) and is defined as follows:

\begin{definition}\label{def:formal-duality}
Let $G$ be some (multiplicative) finite abelian group and
$\hat G$ be its dual group, i.e., the group of homomorphisms from $G$ to $\C^\ast$. Two sets $S \subset G$ and $T \subset \hat G$
form  a formally dual pair, if for all
$\chi\in\hat G$ we have

\begin{equation}
\left|\chi(S)\right|^2 = \frac{|S|^2}{|T|}\nu_T(\chi),\label{eq:def_fd}
\end{equation}
or equivalently if for all $g\in G$ we have
\begin{equation}
\left|g(T)\right|^2 = \frac{|T|^2}{|S|}\nu_S(g),\label{eq:equivalent_def_fd}
\end{equation}
where 
$$\nu_T(\chi) = |\{(\phi,\psi)\in T\times T \ : \ \diff \phi\psi = \chi\}|$$ is called the \emph{weight enumerator} of $T$ and we use the notation $\chi(S) = \sum_{x\in S} \chi(x)$ and $g(T) = \sum_{\chi\in T} \chi(g)$.

\bigskip

A set $S$ is called a \emph{formally dual set} if there is a set $T$ such that $S$ and $T$ form a formally dual pair.
\end{definition}

A finite abelian group is always isomorphic to its dual group. Thus, by choosing
an isomorphism $\Delta:G\rightarrow \hat G$ we can suppose without loss of generality that $T\subset G$.
Equivalently, we can choose a pairing, that is a nondegenerate bilinear form $\left< . , .\right> : G \times G \rightarrow C*$.
We call a set $S\subset G$ \emph{formally self dual} if there is an isomorphism $\Delta:G\rightarrow \hat G$ such that $S$ and $\Delta(S)$ are a formally dual pair (see also \cite{Koelsch2021}).

\bigskip

Formal duality in cyclic groups has been studied in \cite{schuler2017Cyclic}, \cite{xia2016classification}, \cite{malikiosis2017cyclic}.
Discussions of formal duality in general abelian groups and their relation to relative difference sets can be found in \cite{li2018abelian}, \cite{schuler2019phd}.
In \cite{li2019constructions} and \cite{li2020direct_construction} the authors discussed constructions of formal dual sets.
Lastly, in \cite{Koelsch2021} you can find a discussion of formal self dual sets and the connection to formally dual codes.

A regularly discussed research question concerns primitive formally dual sets, where a set $S\subset G$ is said to be \emph{primive} if \emph{none} of the following holds:
\begin{enumerate}
    \item $S\subset v\cdot H$ for some $H < G$,
    \item $S$ is a union of cosets with respect to a non-trivial subgroup of $G$.
\end{enumerate}

It has been shown that any pair of formally dual sets can be reduced to a pair of primitive formally dual sets.

In the literature, quite some effort has been taken to answer the following question:

\begin{question}
Which finite abelian groups contain primitive formally dual sets?
\end{question}

This question is still wildly open. However, it has been solved (using computers for some cases) for groups of order $< 64$. You can find an overview of these smaller groups in \cite[Appendix A]{li2018abelian}.

Some cases of groups of order $64$ have also been solved as can be seen in \cite{Koelsch2021}[Appendix, Table 2], leaving a total of $11$ open cases for the tuple $(G, |S|)$.

\section{New results in groups of order $64$}

In \cite{schuler2019phd}[Chapter 7] described a way to organize subsets of a given group $G$ in a directed tree.
We recall the main advantages and disadvantages of this approach:
\begin{enumerate}
    \item The tree structure makes it easy to perform graph search algorithms.
    \item Some branches of the tree are very similar to each other in the sense, that one branch contains primitive formally dual subsets if and only if the other contains primitive formally dual subsets. In these cases it is sufficient to only search one branch, reducing the number of sets we need to examine.
    \item If there exists a primitive formally dual subset of given size in $G$, the algorithm will find it. The existence can then be proved by the computed example.
    \item If there exists no primitive formally dual subset of the given size in $G$, the algorithm will terminate which certifies the non-existence. However, it does not provide a certificate to easily check the non-existence.
\end{enumerate}
We used the computer search method proposed in \cite{schuler2019phd} exploiting translations and automorphisms in order to produce the following new example in roughly $600$ days:

\begin{theorem}
If $G = \Z_2^2\times\Z_4^2$ then there exists a primitive formally self dual subset $S\subset G$ with $|S| = 8$.
\end{theorem}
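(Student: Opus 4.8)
The statement asserts existence, so the plan is to exhibit a single explicit set and certify that it enjoys the two required properties. Write a generic element of $G=\Z_2^2\times\Z_4^2$ as $g=(x_1,x_2,y_1,y_2)$ with $x_j\in\Z_2$ and $y_j\in\Z_4$, and fix (for instance) the nondegenerate symmetric pairing $\langle g,h\rangle=(-1)^{x_1x_1'+x_2x_2'}\,i^{\,y_1y_1'+y_2y_2'}$, where $i=\sqrt{-1}$ and $h=(x_1',x_2',y_1',y_2')$. This determines an isomorphism $\Delta\colon G\to\hat G$ via $\Delta(g)=\langle g,\cdot\,\rangle$, and under this identification formal self-duality of a size-$8$ set $S$ is, by Definition~\ref{def:formal-duality}, precisely the requirement that
\begin{equation*}
|\chi(S)|^2=8\,\nu_{\Delta(S)}(\chi)\qquad\text{for all }\chi\in\hat G,
\end{equation*}
the constant being $|S|^2/|\Delta(S)|=64/8=8$. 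Since $|S|^2=|G|$, this is the self-dual ``square-root'' regime, and the identity at $\chi=\id$ reads $|S|^2=8\,|\Delta(S)|$, which holds automatically.

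First I would obtain a candidate $S$ with $|S|=8$. This is the genuine difficulty, and it is exactly what the graph-search method recalled above (from \cite{schuler2019phd}) is designed to supply: the directed tree organizes the subsets of $G$, the equivalence of similar branches lets one prune large parts of the search, and the procedure either returns a witnessing set or exhausts the pruned space. Here it returns an explicit $S$ after the reported computation. The upshot is that the main obstacle is the search cost rather than the logic of the argument; once $S$ is written down, everything that remains is a finite, mechanical verification.

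Given such an $S$, verifying formal self-duality means evaluating the $64$ character sums $\chi(S)=\sum_{x\in S}\chi(x)$, computing the autocorrelations $\nu_{\Delta(S)}(\chi)=|\{(\phi,\psi)\in\Delta(S)\times\Delta(S):\diff{\phi}{\psi}=\chi\}|$, and checking the $64$ identities above. Two global relations serve both as consistency checks and as work-savers: Parseval gives $\sum_{\chi}|\chi(S)|^2=|G|\,|S|=512$, while $\sum_{\chi}\nu_{\Delta(S)}(\chi)=|\Delta(S)|^2=64$, matching the factor $8$. Equivalently one may verify the Fourier-dual form~\eqref{eq:equivalent_def_fd}, which in the self-dual case is symmetric to~\eqref{eq:def_fd}.

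Finally, primitivity requires ruling out both conditions of the definition. For the first, I would confirm that $S$ lies in no coset $v\cdot H$ of a proper subgroup $H<G$; as $|S|=8$ this only needs testing against the subgroups of order $8$, $16$ and $32$. For the second, I would confirm that $S$ is not a union of cosets of any nontrivial subgroup, equivalently that the stabilizer $\{g\in G:S\cdot g=S\}$ is trivial, which again is a quick finite check. In summary, the proof is an exhibit-and-verify argument whose only hard ingredient is the search that locates $S$; the duality and primitivity certifications are then routine.
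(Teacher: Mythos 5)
Your overall strategy---exhibit one explicit set and mechanically certify the duality identity and primitivity---is exactly the paper's strategy, and your verification framework is sound: checking $|\chi(S)|^2 = 8\,\nu_{\Delta(S)}(\chi)$ over all $64$ characters is precisely what is required, and your stabilizer criterion for the second primitivity condition is correct, since $S$ is a union of cosets of a nontrivial subgroup $H$ if and only if $H$ is contained in $\{g\in G : S\cdot g = S\}$. But the proposal has a genuine gap: it never produces the witness. For an existence theorem proved by exhibition, the explicit set \emph{is} the proof; writing ``the search returns an explicit $S$ after the reported computation'' defers exactly the step that carries all of the mathematical content, and nothing in your text allows a reader to verify anything. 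The paper supplies the missing object:
\begin{equation*}
S = \{(0,0,0,0),\,(0,0,0,1),\,(0,0,0,2),\,(0,0,1,0),\,(0,0,2,1),\,(0,1,0,0),\,(1,0,0,0),\,(1,1,3,2)\}
\end{equation*}
together with the pairing $\left<(a,b,c,d),(a',b',c',d')\right> = e^{(aa'+bb')i\pi + (cd'+c'd)i\pi/2}$.

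Two further points of comparison. First, the paper's pairing couples the two $\Z_4$ coordinates to \emph{each other} (the term $cd'+c'd$), whereas you fixed the diagonal pairing $i^{\,y_1y_1'+y_2y_2'}$; formal self duality depends on the chosen isomorphism $\Delta$, so a set verified against one pairing need not satisfy the identity for another, and the witness must be checked against the pairing under which it was found. Second, for primitivity the paper avoids your brute-force sweep over all subgroups of order $8$, $16$, $32$ and the stabilizer computation: it simply notes that $S$ generates $G$ and invokes \cite{cohn2014formal}[Lemma 4.2], which yields primitivity of formally dual sets that generate the group. Your direct checks would also succeed, but they are heavier and reprove in this special case what the cited lemma gives for free.
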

\begin{proof}
    Take for example
    $$S = \{(0,0,0,0), (0,0,0,1), (0,0,0,2), (0, 0, 1,0), (0,0,2,1), (0,1,0,0), (1,0,0,0), (1,1,3,2)\}$$
    and the pairing
    $$\left<(a, b, c, d), (a', b', c', d')\right> = e^{(aa' + bb')i\pi + (cd' + c'd)i\pi/2}.$$
    We easily can compute
    $$8\cdot\nu_S(g) = |g(S)|^2$$
    for all $g\in G$ yielding that $S$ is a formally self dual set under the isomorphism equivalent to the given pairing.
    Furthermore, $S$ generates $G$ and therefore is primitive by \cite{cohn2014formal}[Lemma 4.2].
\end{proof}

We used the same searching techniques on the group $\Z_8^2$ where the graph search algorithm terminated without any result. On the one hand, this is a proof of the non-existence. On the other hand, we will not rely on the reader to believe that we did the computation. Therefore we formulate the result as conjecture, hoping that a more sophisticated proof can be found:

\begin{conjecture}
    There is no primitive formally dual subset of size $8$ in $\Z_8^2$.
\end{conjecture}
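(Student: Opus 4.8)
The plan is to extract the strongest possible necessary conditions from Definition~\ref{def:formal-duality} and then reduce to a finite, ideally human-surveyable, case analysis. Summing \eqref{eq:def_fd} over all characters and using $\sum_{\chi}|\chi(S)|^2 = |G||S|$ gives $|S||T| = |G|$, so here $|T| = 8$; and because $|S| = |T| = 8$ the defining relation simplifies to $|\chi(S)|^2 = 8\,\nu_T(\chi)$. Hence every character value satisfies $|\chi(S)|^2 \in \{0,8,16,\dots,64\}$, a non-negative multiple of $8$ bounded by $|S|^2$, and $\sum_{\chi \neq 1}|\chi(S)|^2 = 512 - 64 = 448$.

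Next I would exploit that each $\chi(S)$ lies in $\Z[\zeta_8]$ while $|\chi(S)|^2$ is a rational integer, hence Galois-invariant. Since the conjugate of $\chi_u(S)$ under $\zeta_8 \mapsto \zeta_8^{j}$ is $\chi_{ju}(S)$, the function $u \mapsto |\chi_u(S)|^2$ is constant on each orbit of $(\Z/8)^{*} = \{1,3,5,7\}$ acting by multiplication on $\Z_8^2\setminus\{0\}$. These orbits consist of $3$ of size $1$ (the order-$2$ elements), $6$ of size $2$ (order $4$), and $12$ of size $4$ (order $8$), so the whole spectrum is encoded by $21$ orbit values. The arithmetic of $\Z[\zeta_8]$ restricts each value by order: for an order-$2$ character $\chi(S)\in 2\Z$ forces $|\chi(S)|^2 \in \{0,16,64\}$; for order $4$ one has $\chi(S) \in \Z[i]$, so $|\chi(S)|^2$ is a sum of two squares divisible by $8$, which forces both summands even; and for order $8$ the condition $8 \mid |\chi(S)|^2$ is read through the totally ramified prime ideal $(1-\zeta_8)$ above $2$, giving further congruence restrictions. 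Primitivity then enters cleanly: $|\chi(S)|^2 = 64$ for some $\chi \neq 1$ forces $S$ into a coset of $\ker\chi$, contradicting the first primitivity condition; and the second condition ($S$ a union of cosets of $H$) translates, via $|\chi(S)|^2 = 0$ off $H^{\perp}$, into containment of the dual set $T$ in a coset. Thus for a primitive pair every orbit value is at most $56$, and neither $S$ nor $T$ may concentrate on a proper subgroup.

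To cut down the admissible spectra I would chain subgroup-sum identities of the form $\sum_{u \in K}|\chi_u(S)|^2 = |K|\sum_{c}|S \cap c|^2$, where $K \le \hat G$ and $c$ ranges over the cosets of $K^{\perp}$. For the $2$-torsion subgroup this reads $64 + \sum_{\text{order }2}|\chi_u(S)|^2 = 4\sum_{c \in G/2G}n_c^2$, with $n_c = |S\cap c|$ and $\sum_c n_c = 8$, pinning down how $S$ distributes among the four classes modulo $2G$ in terms of the three order-$2$ values. Running the analogous identities for the other small-index subgroups, together with the mirror conditions obtained by applying everything to $T$ (whose spectrum $|g(T)|^2 = 8\nu_S(g)$ obeys the same rules), should leave only finitely many candidate spectra. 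Each of these may be reduced further modulo the action of $\mathrm{GL}_2(\Z_8) = \Aut(\Z_8^2)$ on orbits and modulo translation, so that in principle one is left with a short, checkable list.

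The main obstacle is the last step: passing from an admissible \emph{spectrum} to the \emph{existence of an actual set}. The conditions above are all necessary, but they constrain only the multiset of values $|\chi(S)|^2$; recovering $S$ requires that the Fourier inversion of $\nu_S(d) = \tfrac18\sum_u \nu_T(u)\,\zeta_8^{-\langle u,d\rangle}$ produce a genuine $\{0,1\}$-valued autocorrelation of an $8$-element set, and simultaneously that a compatible partner $T$ exist. This realizability gap is exactly what the graph search settles by brute force, and closing it by hand seems to need an extra structural ingredient: a multiplier-type theorem for formally dual sets, or a sharper divisibility argument in $\Z[\zeta_8]$ that rules out the surviving spectra outright. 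I expect that shrinking the candidate spectra to a genuinely small number, rather than the verification of any single one, will be where the real difficulty lies.
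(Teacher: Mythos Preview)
The paper does \emph{not} prove this statement: it is deliberately labelled a Conjecture. The authors report that their graph-search algorithm on $\Z_8^2$ terminated with no primitive formally dual subset of size $8$ found, remark that this is in principle a proof of non-existence, but then explicitly decline to ask the reader to trust an unverified computation and instead state the claim as a conjecture ``hoping that a more sophisticated proof can be found.'' So there is no argument in the paper to compare against beyond ``exhaustive computer search, no certificate provided.''

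Your proposal is therefore not a reconstruction of the paper's proof but an attempt at exactly the sophisticated human proof the authors say they lack. As an outline of necessary conditions it is sound: the relation $|S||T|=|G|$, the bound $|\chi(S)|^2\in 8\Z_{\ge 0}$, Galois-constancy on $(\Z/8)^\ast$-orbits, the primitivity consequences, and the coset-distribution identities are all correct and are the natural first moves. But you have also correctly identified the genuine gap yourself: none of these constraints, even taken together with the $\Aut(\Z_8^2)$-reduction, is known to force a contradiction, and the passage from an admissible spectrum to the non-existence of a realizing pair $(S,T)$ is exactly the step your sketch does not close. Absent the ``extra structural ingredient'' you mention (a multiplier theorem, a sharper $\Z[\zeta_8]$ argument, or similar), what remains is a finite case analysis whose size you have not bounded and which, as far as the paper is concerned, has only been carried out by the very computer search the authors chose not to rely on. So the proposal is a reasonable plan, not a proof, and it does not yet go beyond what the paper already concedes is open.
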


\bibliographystyle{alpha}
\bibliography{references}

\end{document}